%Arxiv version 2/26/11
%{{{ topmatter

\documentclass[12pt]{amsart}
\headheight=7pt
\textheight=574pt
\textwidth=400pt
\topmargin=14pt
\oddsidemargin=18pt
\evensidemargin=18pt
\usepackage{amsfonts,amsmath,amssymb,amsthm,latexsym,verbatim}
\input xypic
%defs
\newtheorem{theorem}{Theorem}[section]
\newtheorem{lemma}[theorem]{Lemma}
\newtheorem{proposition}[theorem]{Proposition}

\theoremstyle{remark}
\newtheorem{remark}[theorem]{Remark}
\theoremstyle{definition}

\newtheorem{example}[theorem]{Example}

%math operators

\DeclareMathOperator{\image}{{\mathrm{Im}}}

\DeclareMathOperator{\rad}{{\mathrm{rad}}}

\DeclareMathOperator{\End}{{\mathrm{End}}}
\newcommand{\abs}[1]{|#1|}	%absolute value
\newcommand{\dprod}[2]{\langle {#1} , {#2}\rangle}	%dot product
\newcommand{\Ff}{{\mathcal {F}}} 

\newcommand{\F}{{\mathbf {F}}}

 	%general linear
\DeclareMathOperator{\SL}{{\mathrm {SL}}} 	%general linear

 	%symplectic
 	%symplectic

\DeclareMathOperator{\rank}{{\mathrm {rank}}} 	%rank
 	
\DeclareMathOperator{\Ch}{{\mathrm {Ch}}} 	
 	
 	%induced

 	%all-one vector

\begin{document}
\title[Oppositeness modules]{Oppositeness in buildings 
and simple modules for finite groups of Lie type}
\author{Peter Sin}
\address{Department of Mathematics\\University of Florida\\ P. O. Box 118105\\ Gainesville
FL 32611\\ USA}
\date{}
\maketitle
%}}}
%{{{ abstract 

\begin{abstract}
In the building of a finite group of Lie type we consider the incidence relations defined 
by oppositeness of flags. 
Such a relation gives rise to a homomorphism of permutation
modules (in the defining characteristic) whose image is a simple module for the group. 
The $p$-rank of the incidence relation is then the dimension of this simple module.
We give some general reductions towards the determination of the character of the 
simple module. Its highest weight is identified and the problem is reduced to the case of a prime field. The reduced problem can be approached through the representation theory of algebraic groups and the methods are illustrated for some examples.
\end{abstract}

%}}}

\section{Introduction}

Let $G$ be a finite group with a split BN-pair of characteristic $p$ and rank $\ell$,
and let $I=\{1,\ldots,\ell\}$. The Weyl group $W$ is a euclidean reflection
group in a real vector space $V$, containing a root system $R$ and if
$S=\{\alpha_i\mid i\in I\}$ is a set of simple roots then $W$ is a Coxeter group
with generators the fundamental reflections $w_i$, $i\in I$, 
where $w_i$ is the reflection in the hyperplane perpendicular to the
simple root $\alpha_i$. 

For $J\subset I$, let $W_J:=\langle w_i \mid i\in J\rangle$ 
be the associated standard parabolic subgroup of $W$ 
and $P_J=BW_JB$ the standard
parabolic subgroup of $G$. 
By a {\it type} we simply mean a nonempty subset of $I$.
An {\it object of type} $I\setminus J$, or {\it of cotype} $J$ is, by definition, a right coset of $P_J$ in $G$. 

The associated building is the simplicial complex in which the simplices
are the cosets of the standard parabolic subgroups of $G$,
and the face relation is the reverse of inclusion. An object
of type $I\setminus J$ is a $(\abs{I\setminus J}-1)$-simplex.

An example which has an elementary description is the
building for $G=\SL(V)$ of type $A_\ell$, where $V$ is an $(\ell+1)$-dimensional
vector space. A $k$-simplex is a flag  of $k+1$ nontrivial,  proper subspaces
of $V$, and its type is the set of dimensions of the subspaces in the flag.

Given two fixed types there are various  incidence relations
between objects of the respective types which are preserved by the action of $G$. 
In the above example, if we consider in $V$ the complete flags (objects of type $I$)
and the $i$-dimensional subspaces (objects of type $\{i\}$), 
then an incidence relation can be specified
by prescribing a sequence of $\ell$ dimensions for the intersections
of an $i$-dimensional subspace with the subspaces in a complete flag.

%%%%%%permutation module
An incidence relation $\mathcal R$ between the sets of objects of cotypes $J$ and $K$
can be encoded by an incidence matrix or, more conveniently for our purposes,
by an {\it incidence map of permutation modules}.
Let $k$ be a commutative ring with 1.
Let $\Ff_J$ denote the space of functions from the set $P_J\backslash G$ of objects of cotype $J$ to $k$. Then $\Ff_J$ is a left $kG$-module by the rule
\begin{equation}\label{gaction}
(xf)(P_Jg)=f(P_Jgx), \quad f\in \Ff_J,\quad g,x\in G.
\end{equation}
Let $\delta_{P_Jg}$ denote the characteristic function of the object $P_Jg\in P_J\backslash G$.  These characteristic functions are permuted transitively by $G$ and form a
basis for $\Ff_J$.

A $G$-invariant relation $\mathcal R$ defines a $kG$-homomorphism
$\eta: \Ff_J\to\Ff_K$ given by
\begin{equation}
 \eta(f)(P_Kh)=\sum_{P_Jg\mathcal R P_Kh}f(P_Jg).
\end{equation}
The characteristic function of an object of cotype $J$ is sent
to the sum of the characteristic functions of all objects  of cotype $K$ which
are incident with it. 

Naturally, we would like to compute invariants of $\eta$.
We can ask for its Smith Normal Form when $k=\mathbb Z$, 
its rank when $k$ is a field, or its eigenvalues if $J=K$.
For greater detail we can also consider the $kG$-module structure of the image of $\eta$.
We have described the problem in great generality and,
as  might be expected, the nature and difficulty of a specific instance will 
depend very much on $G$, $\mathcal R$ and $k$. 
For example, if $k$ is a field then whether or
not its characteristic is the same as that of $G$ is crucial, because 
the representation theory of $G$ in its defining characteristic is closely
related to the representation theory of reductive algebraic groups, while
the cross-characteristic representation theory of $G$ has a closer connection
to its complex character theory and to Brauer's theory of blocks. 
This dichotomy can be observed in the case of generalized quadrangles
by comparing the results in \cite{BBW}, \cite{SaS} and \cite{CSX}. 
In the first two papers $k$ has characteristic 2, while $G$ 
has odd characteristic in the first and
even characteristic in the second. In the third, the group  $G$ and
$k$ have the same odd characteristic. 

In this paper we restrict ourselves to the case where $k$ is a field of the same
characteristic as $G$ and turn next to the choice of $\mathcal R$.
We know from examples (e.g. \cite{BaS}) that the permutation $kG$-module $\Ff_J$
can have many composition factors, growing 
with respect to the Lie rank of $G$, and also growing for fixed rank
and fixed characteristic as the size of the field increases.
The same may hold for the image of $\eta$. Even for rank $2$,  examples 
(e.g. \cite{SaS}) demonstrate that the number of composition factors of
$\image\eta$ can grow without bound. In many cases, one knows neither
the number of composition factors nor their dimensions, and 
in these cases we have no formula for $\rank \eta$. 
There are also examples of two problems which seem very similar on the surface,
but one turns out to be much harder than the other. 
For example, the problem of computing the rank of the inclusion relation 
of $2$-dimensional subspaces in $(n-2)$-dimensional subspaces in a vector space of dimension $n\geq 5$ is unsolved. In contrast, a simple formula is known (\cite{Sin}) 
for the rank of the relation of nonzero intersection.
It is therefore desirable, for a given $\mathcal R$, to have some idea 
{\it a priori} of the complexity of the $kG$-submodule lattice of $\image\eta$.
From this point of view, the oppositeness relations (defined below),
which will be shown to give simple modules, can be considered to form the starting 
point of the theory.

\section{Oppositeness in buildings}
Let $R^+$ denote the set of positive roots.
The length of an element $w\in W$, denoted $\ell(w)$, is the length of the shortest expression for $w$ as a word in the generators $w_i$. This is also equal to the number
of positive roots which $w$ transforms to negative roots.
There is a unique element of maximal length, denoted $w_0$, which sends all
positive roots to negative roots.

Notions of oppositeness exist at the level of types and at the level of objects.
Two types $J$ and $K$ are {\it opposite} if $\{-w_0(\alpha_i) \mid i\in J\}=\{\alpha_j\mid j\in K\}$, or, equivalently, if $\{w_0w_iw_0\mid i\in J\}= \{w_i\mid i\in K\}$.
In other words, two types are opposite when they are mapped to each other
by the symmetry of the Dynkin diagram induced by $w_0$.
If $w_0=-1$, then every type is opposite itself;
this holds for all connected root systems
except for those of type $A_\ell$, $D_\ell$  ($\ell$ odd) and $E_6$.
Let $J$ and $K$ be fixed opposite cotypes. 
An object $P_Jg$ of cotype $J$ and an object $P_Kh$ of cotype $K$
are  {\it opposite} each other 
if $P_Khg^{-1}P_J=P_Kw_0P_J$ ($\iff P_Kh\subseteq P_Jw_0P_Kg
\iff P_Jg\subseteq P_Kw_0P_Jh$). 

The definition of oppositeness  is a precise way to formulate the
intuitive idea of two objects being in ``general position''.
From now on we shall let $J$ and $K$ be fixed opposite cotypes. 

\begin{example} Consider a universal Chevalley group $G$ type $A_\ell$ and $J=I\setminus\{i\}$.
Then $G\cong\SL(V)$ for an $(\ell+1)$-dimensional vector space $V$.
The objects of type $\{i\}$ can be identified with $i$-dimensional subspaces
of $V$ and objects of the opposite type
$\{\ell+1-i\}$ can  be identified with $\ell+1-i$-dimensional
subspaces. A subspace of  type $\{i\}$ is opposite one of type 
$\{\ell+1-i\}$ if their
intersection is the zero subspace.  A familar special
case is when $\ell=3$ and $i=\ell+1-i=2$. If we think projectively, the objects are
lines in space and the oppositeness relation is skewness.

More generally, an object of cotype $J=\{j_1,\ldots, j_m\}$ is  a flag 
$$
V_{j_1}\subset V_{j_2}\subset\cdots\subset V_{j_m}
$$
of subspaces of $V$ with $\dim V_{i_j}=i_j$.
If $V'_{k_1}\supset V'_{k_2}\supset\cdots\supset V'_{k_m}$ is an object of the opposite type, then the two flags are opposite iff $V_{i_j}\cap V'_{k_j}=\{0\}$, for $j=1$, \dots, $m$.
\end{example}

\begin{example}
Let $G$ be of type $B_\ell$ or $C_\ell$ with
$\ell\geq 2$ or $D_\ell$ with $\ell\geq 3$  and let  $J=I\setminus\{1\}$. 
Then  $J$ is opposite to itself. In the $B_\ell$ case, objects of cotype $J$
can be identified with singular points (one-dimensional subspaces) 
with respect to a nondegenerate quadratic form in a finite vector space of 
dimension $2\ell+1$. Two 
singular points are opposite if and only if they are not orthogonal.
Similarly, objects of cotype $J$ can be viewed as
singular points of a $2\ell$-dimensional vector space 
with respect to a symplectic symplectic form for type $C_\ell$ or a 
quadratic form for type $D_\ell$, with oppositeness interpreted as non-orthogonality.
Two points are opposite if and only if they do not lie on a singular line.
Thus, the concept of oppositeness generalizes the concept of
collinearity of singular points in these classical geometries.   
\end{example}

\begin{example}\label{E6example}
Consider a universal Chevalley group $G$ of type $E_6(q)$. 
This group has a concrete description as the group of linear transformations
which preserve a certain cubic form on a 27-dimensional vector space $V$ over $\F_q$.
The geometry of this space has been studied in great detail. 
(See \cite{Asch}, \cite{CohCoop}, \cite{Cooperstein}.)
Consider the objects of type $1$ and the opposite type $6$. (See Figure 1.)
We can  view these,
respectively, as the singular points and singular (in a dual sense) hyperplanes of  $V$.
A singular point $\langle v\rangle$ is opposite a singular
hyperplane $H$ if and only $v\notin H$. 

\begin{figure}[h]
\centering
%\begin{picture}(width,height,x-offset,y-offset)
\begin{picture}(100,100)(50,-50)
\thicklines
   \put(5,30){$\alpha_1$}
   \put(55,30){$\alpha_2$}
   \put(105,30){$\alpha_3$}
   \put(155,30){$\alpha_5$}
   \put(205,30){$\alpha_6$}
   \put(105,-45){$\alpha_4$}
   \put(10,20){\line(1,0){200}}
  \put(110,20){\line(0,-1){50}}
  \multiput(10,20)(50,0){5}{\circle*{5}}
  \put(110,-30){\circle*{5}}
  \put(210,20){\circle*{10}}
  \put(10,20){\circle{10}}
\end{picture}
\caption{}
\end{figure}
\end{example}
For further examples of oppositeness, we refer the reader to \cite{Brouwer}.

\begin{comment}
The oppositeness graph $\Gamma_{J, K}$ is the bipartite graph whose
parts are the sets of objects of cotypes $J$ and $K$ respectively,
with two vertices adjacent when the objects are opposite.

Let $A=A_{J,K}$ be the incidence matrix of the oppositeness relation,
with rows indexed by objects of cotype $J$ (in some order)
and columns indexed by objects of cotype $K$. Then the adjacency matrix
of $\Gamma_{J, K}$ is $\begin{bmatrix}0&A\\A'&0
\end{bmatrix}$, where $A'$ is the transpose of $A$.

Suppose $G$ is  defined over $\F_q$, where $q$ is a power of $p$. 
Brouwer \cite{Brouwer} has shown that the square of each eigenvalue
of the adjacency matrix of an oppositeness graph is a power of $q$, 
\end{comment}
Let $A=A_{J,K}$ be the incidence matrix of the oppositeness relation,
with rows indexed by objects of cotype $J$ (in some order)
and columns indexed by objects of cotype $K$.
Suppose $G$ is  defined over $\F_q$, where $q$ is a power of $p$. 
Brouwer \cite{Brouwer} has shown that each eigenvalue
of $AA'$  is a power of $q$, where $A'$ is the transpose of $A$.

In the following sections, we show that the $p$-rank of the incidence matrix $A$
is the dimension of an irreducible $p$-modular
representation of $G$. This fact is derived as a corollary of a general theorem of  
Carter and Lusztig \cite{CarterLusztig}. 
Then we describe the simple module in terms of its highest weight
and show, using Steinberg's Tensor Product Theorem that, given a  root system and choice
of opposite types, the $p$-ranks can be computed for all $q$
once they are known in the case $q=p$. 
We then discuss methods for computing the character of the
simple module in some examples.

\section{Some lemmas on double cosets}
Let $V_J$ be the subspace of $V$ spanned by $S_J=\{\alpha_i\mid i\in J\}$.
Then $R_J=R\cap V_J$ is a root system in $V_J$ with simple system $S_J$
and Weyl group $W_J$. For $w\in W_J$, its length as an element of $W_J$ is the same as 
for $W$. Let $w_J$ be the longest element in $W_J$.

The following is immediate from the definition of opposite types.
\begin{lemma}
If $w_0=w^*w_J=w_Kw'$ then $w^*=w'$.
\end{lemma}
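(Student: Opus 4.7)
The plan is to use the involutivity of the longest elements and the defining property of opposite types to identify both $w^*$ and $w'$ with the same conjugate of $w_J$ by $w_0$.

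First I would solve each equation for the unknown factor. Since $w_J^2 = 1$ and $w_K^2 = 1$ (longest elements of Coxeter groups are involutions) and $w_0^2 = 1$, the equation $w_0 = w^* w_J$ gives $w^* = w_0 w_J$, and the equation $w_0 = w_K w'$ gives $w' = w_K w_0$. Hence the claim $w^* = w'$ reduces to the identity
\begin{equation*}
 w_0 w_J w_0 = w_K.
\end{equation*}

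Next I would establish this identity from the definition of opposite cotypes. The hypothesis that $J$ and $K$ are opposite says precisely that $w_0 w_i w_0 = w_{\sigma(i)}$ for a bijection $\sigma\colon J \to K$. Therefore conjugation by $w_0$ restricts to a group isomorphism $W_J \to W_K$ that sends the set of Coxeter generators $\{w_i \mid i \in J\}$ bijectively onto $\{w_j \mid j \in K\}$. Because length in a parabolic subgroup is defined in terms of minimal word length in its simple reflections, such a generator-preserving isomorphism preserves length, and so carries the unique longest element of $W_J$ to the unique longest element of $W_K$. That is, $w_0 w_J w_0 = w_K$.

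Combining the two observations gives $w^* = w_0 w_J = w_0 w_J w_0 \cdot w_0 = w_K w_0 = w'$, as required. There is no real obstacle here; the only subtlety worth flagging is the remark that length in $W_J$ agrees with length in $W$ (already noted in the excerpt), which is what lets us speak unambiguously of a ``longest element'' of $W_J$ and then conclude that the isomorphism induced by conjugation sends $w_J$ to $w_K$.
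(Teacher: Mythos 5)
Your proof is correct, and since the paper dismisses this lemma as ``immediate from the definition of opposite types'' without giving any argument, your write-up is a faithful unpacking of exactly what is being called immediate: solve each factorization for the unknown using $w_0^2 = w_J^2 = w_K^2 = 1$, then use that conjugation by $w_0$ carries the Coxeter generators of $W_J$ bijectively to those of $W_K$ (the defining property of opposite types) and hence carries the longest element $w_J$ to the longest element $w_K$. Nothing to add; the one subtlety you flagged (length in $W_J$ agrees with length in $W$) is indeed the point the paper chose to record explicitly in the sentence just before the lemma.
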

\qed

For $w\in W$, we recall that $U^-_w=U\cap w^{-1}w_0Uw_0w$. 
For $w\neq 1$ this group is a nontrivial $p$-group. We also
have that given a choice of preimage $n_w\in N$ of $w$,
each element of $BwB$ can be written as a unique product
$g=bn_wu$, with $b\in B$ and $u\in U^-_w$. Thus,
$\abs{U^-_w}$ equals the number of cosets $Bg'$ in $BwB$, which is equal
to $q^{\ell(w)}$ in the untwisted case. In the case of twisted groups,
the equation  $\abs{U^-_w}=q^{\ell(w)}$ is also valid if
$q$ is taken to be the {\it level} \cite[Definition 2.1.9]{GLS3} 
of the Frobenius endomorphism defining $G$ 
(which is a prime power or, for Suzuki and Ree groups, the
square root of an odd power of $2$ or $3$) 
and $\ell(w)$ means the length in the Weyl group of 
the untwisted root system. We refer to \cite[Theorems 2.3.8 and 2.4.1]{GLS3}
and \cite{CarterBook}. 
We shall use only the obvious fact that $U^-_w$ is a nontrivial $p$-group when
$w\neq 1$.)

\begin{lemma}(\cite[Proposition 3.1]{Brouwer}) We have $P_Kw_0P_J=P_Kw^*P_J=P_Kw^*B$.
The number of cosets $P_Kg$ in $P_Kw_0P_J$ is $q^{\ell(w^*)}$.
\end{lemma}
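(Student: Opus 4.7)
The plan is to prove the two equalities in order and then count, using throughout the Bruhat decomposition and length additivity for minimum-length double coset representatives. For $P_Kw_0P_J=P_Kw^*P_J$, the hypothesis $w_0=w_Kw^*$ (with $w^*=w'$ supplied by the preceding lemma) places $w_0$ inside $P_Kw^*\subseteq P_Kw^*P_J$, so these two $(P_K,P_J)$-double cosets, sharing $w_0$, coincide.

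For $P_Kw^*P_J=P_Kw^*B$, the main observation is the conjugation identity $w^*W_J(w^*)^{-1}=W_K$. This combines the definition of opposite types (equivalent to $w_0W_Jw_0=W_K$) with $w^*=w_0w_J$ (a consequence of $w_0=w^*w_J$ and $w_J^2=1$). Given this, for any $w\in W_J$ the element $w^*w(w^*)^{-1}$ lies in $W_K\subseteq P_K$, so $P_Kw^*w=P_Kw^*$. Combined with the Bruhat multiplication rule $Bw^*B\cdot BwB=Bw^*wB$ (valid because $w^*$ is the minimum-length representative of the left $W_J$-coset $w^*W_J$, so $\ell(w^*w)=\ell(w^*)+\ell(w)$ for $w\in W_J$), and with $P_J=\bigsqcup_{w\in W_J}BwB$, one obtains $P_Kw^*BwB\subseteq P_K\cdot Bw^*wB=P_Kw^*wB=P_Kw^*B$, which collapses $P_Kw^*P_J$ down to $P_Kw^*B$; the reverse inclusion is immediate.

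For the count, decompose $P_K=\bigsqcup_{u\in W_K}BuB$ and use that $\ell(uw^*)=\ell(u)+\ell(w^*)$ for all $u\in W_K$ (because $w^*$ is also the minimum-length representative of the right $W_K$-coset $W_Kw^*=W_Kw_0$) to get the disjoint Bruhat decomposition
\begin{equation*}
P_Kw^*B=\bigsqcup_{u\in W_K}Buw^*B.
\end{equation*}
Summing the $q^{\ell(u)+\ell(w^*)}$ cosets $Bg$ inside each cell yields $q^{\ell(w^*)}\cdot|P_K/B|$ cosets $Bg$ inside $P_Kw^*B$, and since every coset $P_Kg$ absorbs exactly $|P_K/B|$ cosets $Bg$, one concludes that there are $q^{\ell(w^*)}$ cosets $P_Kg$ inside $P_Kw^*B$. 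Reading $q$ as the level and $\ell$ as length in the untwisted Weyl group, per the text, covers the twisted cases. The sole non-routine ingredient is the identification $w^*W_J(w^*)^{-1}=W_K$; everything else is bookkeeping with Bruhat decompositions and length additivity for minimum-length representatives in $W_K\backslash W/W_J$.
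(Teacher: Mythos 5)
Your argument is correct. The paper itself gives no proof of this lemma, citing Brouwer's Proposition~3.1 and closing the statement with a bare \qed, so what you have supplied is a genuine self-contained proof of a result the paper treats as a black box. The three steps are all sound: the first equality follows because $w_0=w_Kw^*$ places $w_0$ in both double cosets; the second rests on the conjugation identity $w^*W_J(w^*)^{-1}=W_K$ (a clean consequence of $w_0W_Jw_0=W_K$ and $w^*=w_0w_J$) together with the Bruhat multiplication rule $Bw^*B\cdot BwB=Bw^*wB$, which is valid since $w^*$ is the minimum-length element of $w_0W_J$; and the count uses the disjoint decomposition $P_Kw^*B=\bigsqcup_{u\in W_K}Buw^*B$, valid because $w^*$ is also the minimum-length element of $W_Kw_0$, so that each cell $Buw^*B$ contributes $q^{\ell(u)+\ell(w^*)}$ Borel cosets and the total divides cleanly by $\abs{P_K/B}=\sum_{u\in W_K}q^{\ell(u)}$. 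This is the standard distinguished-coset-representative bookkeeping and is presumably close in spirit to Brouwer's original argument, but since the paper omits it entirely, you have added genuine content rather than paraphrased an existing proof.
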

\qed

\begin{lemma}\label{countfiber}(cf. \cite[Corollary 3.2]{Brouwer})
Let $w\in W_J$. Then $B(w^*w)B\subseteq P_Kw_0P_J$. Let $x\in G$.
For a given coset
$P_Kh\subseteq P_Kw_0P_Jx$,  the number of cosets $Bg\subseteq Bw^*wBx$ such that
$Bg\subseteq P_Kh$ is $q^{\ell(w)}$.
\end{lemma}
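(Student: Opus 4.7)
The plan is to derive both assertions from the structural identity $P_Kw_0P_J=P_Kw^*P_J$ of the previous lemma, together with a length identity for $w^*w$ when $w\in W_J$. For the containment, note that $w^*\in P_Kw_0P_J$ and $w\in W_J\subseteq P_J$, so $w^*w\in P_Kw_0P_J\cdot P_J=P_Kw_0P_J$; taking $(B,B)$-double cosets on both sides yields $Bw^*wB\subseteq P_Kw_0P_J$.

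For the fiber count, first reduce to the case $x=1$ by right translation by $x^{-1}$, which gives a bijection carrying $B$-cosets in $Bw^*wBx$ to $B$-cosets in $Bw^*wB$, and $P_K$-cosets in $P_Kw_0P_Jx$ to those in $P_Kw_0P_J$, while preserving the incidence $Bg\subseteq P_Kh$. Next, I would establish the length identity $\ell(w^*w)=\ell(w^*)+\ell(w)$ for $w\in W_J$. This follows once $w^*=w_0w_J$ is recognized as the longest element of the set $W^J$ of minimum-length coset representatives for $W/W_J$: for any simple reflection $w_i$ with $i\in J$, the standard formula $\ell(w_0u)=\ell(w_0)-\ell(u)$ gives $\ell(w^*w_i)=\ell(w_0w_Jw_i)=\ell(w_0)-\ell(w_J)+1=\ell(w^*)+1$, so $w^*\in W^J$; then $\ell(w^*w)=\ell(w^*)+\ell(w)$ follows from the uniqueness of the parabolic factorization $w^*w=w^*\cdot w$ with $w^*\in W^J$ and $w\in W_J$. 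Hence $Bw^*wB$ contains $q^{\ell(w^*)+\ell(w)}$ left $B$-cosets, while the previous lemma gives $q^{\ell(w^*)}$ left $P_K$-cosets in $P_Kw_0P_J$.

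Now consider the natural projection $\pi\colon Bw^*wB/B\to P_Kw_0P_J/P_K$ defined by $Bg\mapsto P_Kg$, which is well-defined by the containment and the inclusion $B\subseteq P_K$. For surjectivity, every $P_K$-coset in $P_Kw_0P_J=P_Kw^*B$ has the form $P_Kw^*b$ for some $b\in B$, and its preimage contains $Bw^*wb$ because $w^*w(w^*)^{-1}=w_0(w_Jww_J^{-1})w_0^{-1}\in w_0W_Jw_0^{-1}=W_K\subseteq P_K$ by the opposite-type condition. For equidistribution of fibers, observe that right multiplication by $B$ preserves both $Bw^*wB$ and $P_Kw^*B$, and $\pi$ is $B$-equivariant; since $B$ acts transitively on the $P_K$-cosets in $P_Kw^*B$ via $P_Kw^*b\cdot b^{-1}b'=P_Kw^*b'$, all fibers of $\pi$ have the same cardinality, which the total counts force to equal $q^{\ell(w)}$. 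The main conceptual point, and the feature that keeps the argument short, is that no explicit enumeration of the fiber is required: once the length identity and the right $B$-symmetry are in place, the fiber size is determined by counting.
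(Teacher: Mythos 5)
Your proof is correct and follows the same basic counting strategy as the paper, which is terser: after establishing the containment in the same way, the paper simply records the two cardinalities ($q^{\ell(w^*)}$ cosets of $P_K$ in $P_Kw_0P_Jx$, $q^{\ell(w^*w)}=q^{\ell(w^*)+\ell(w)}$ cosets of $B$ in $Bw^*wBx$) and concludes ``by counting.'' What you add, and what makes your write-up genuinely more complete, is the justification that the fibers of the projection $Bg\mapsto P_Kg$ all have the same size: you observe that the projection is equivariant for the right $B$-action, which acts transitively on the $P_K$-cosets in $P_Kw^*B$, so the quotient of the two counts is the common fiber size. This is exactly the step the paper leaves implicit. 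You also supply a proof of the length identity $\ell(w^*w)=\ell(w^*)+\ell(w)$ via the fact that $w^*=w_0w_J$ is the longest element of the set $W^J$ of minimal coset representatives, whereas the paper uses this identity without comment both here and in the proof of the main theorem. One small stylistic point: in the containment argument, rather than ``taking $(B,B)$-double cosets on both sides,'' the cleaner phrasing is simply that $Bw^*wB\subseteq P_K w^*w P_J = P_Kw_0P_J$ since $B\leq P_K$ and $B\leq P_J$.
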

\begin{proof}
The first assertion is true because $P_Kw^*wP_J=P_Kw^*P_J=P_Kw_0P_J$.
The rest follows by counting.
The number of cosets $P_Kh\subseteq P_Kw_0P_Jg$ is $q^{\ell(w^*)}$
and the number of cosets $Bg\subseteq Bw^*wBx$ is 
$q^{\ell(w^*w)}=q^{\ell(w^*)+\ell(w)}$.
\end{proof}

\section{Permutation modules on flags and their oppositeness homomorphisms}
Let $k$ be an algebraically closed  field of characteristic $p$.
Recall from the Introduction that $\Ff_J$ denotes the space of functions from the set $P_J\backslash G$ of objects of cotype $J$ to $k$,  with left $kG$-module structure given by 
(\ref{gaction}).
Also, $\delta_{P_Jg}$ denotes the characteristic function of the object $P_Jg\in P_J\backslash G$ and these functions form a permutation basis of $\Ff_J$.
(The module $\Ff_J$ is isomorphic to the $kG$-permutation module on the {\emph left}
cosets of $P_J$ in $G$ by the map sending $\delta_{P_Jg}$ to $g^{-1}P_J$.)
The relation of oppositeness defines the $kG$-homomorphism
$\eta: \Ff_J\to\Ff_K$ given by
\begin{equation}
 \eta(f)(P_Kh)=\sum_{P_Jg\subseteq P_Jw_0P_Kh}f(P_Jg)
\end{equation}
and we have
$$
\eta(\delta_{P_Jg})=\sum_{P_Kh\subseteq P_Kw_0PJg}\delta_{P_Kh}.
$$

The following result is essentially
a corollary of more general results in \cite{CarterLusztig}.

\begin{theorem}\label{main}
The image of $\eta$ is a simple module, uniquely characterized by
the property that its one-dimensional $U$-invariant subspace has full stablizer equal to $P_J$, which acts trivially on it.
\end{theorem}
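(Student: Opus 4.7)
The plan is to produce a distinguished generator $v_0$ of $\image\eta$ that is fixed pointwise by $P_J$, to show that $(\image\eta)^U = kv_0$, and then to deduce both simplicity and the uniqueness characterization via the standard observation that a nontrivial $p$-group in characteristic $p$ has nonzero invariants on any nonzero module. First I set $v_0 := \eta(\delta_{P_J})$. From the action formula $x\cdot\delta_{P_Jg} = \delta_{P_Jgx^{-1}}$ and the $G$-equivariance of $\eta$ one obtains $\eta(\delta_{P_Jg}) = g^{-1}\cdot v_0$, so $\image\eta = kG\cdot v_0$ is cyclic; and for $p \in P_J$ one has $p\cdot v_0 = \eta(\delta_{P_Jp^{-1}}) = v_0$, so $P_J$ (and in particular $U \subseteq B \subseteq P_J$) fixes $v_0$ pointwise. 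Brouwer's lemma identifies $v_0$ with the characteristic function of the single $B$-orbit $P_Kw^*B/P_K$ on $P_K\backslash G$, exhibiting $v_0$ as one of the natural $U$-fixed basis vectors of $\Ff_K^U$.

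The main step is to prove $(\image\eta)^U = kv_0$. By the Bruhat decomposition I would take the basis $\{\tau_w\}_{w \in W_J\backslash W}$ of $\Ff_J^U$ in which $\tau_w$ is the sum of $\delta_{P_Jg}$ over the $P_J$-cosets contained in $P_JwB$, and likewise a basis $\{\sigma_{w'}\}_{w' \in W_K\backslash W}$ of $\Ff_K^U$. The coefficient of $\sigma_{w'}$ in $\eta(\tau_w)$ is the number of $P_J$-cosets in $P_JwB$ opposite to any fixed $P_K$-coset in the $B$-orbit labeled by $w'$; applying Lemma \ref{countfiber} expresses this count as a power $q^n$ of $q$ governed by Weyl-group lengths. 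Only in the single case $w=1$, $w'=w^*$ does $n$ vanish, producing $\eta(\tau_1) = v_0$; every other contribution has $n \geq 1$ and so is zero in $k$. Hence $\eta(\Ff_J^U) = kv_0$. A short additional argument, exploiting the $B$-orbit filtration on $\Ff_J$ to lift $U$-fixed vectors of the quotient $\image\eta$ back to $U$-fixed vectors upstairs, then yields $(\image\eta)^U = \eta(\Ff_J^U) = kv_0$.

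Simplicity is now immediate: any nonzero $kG$-submodule $N \subseteq \image\eta$ has $N^U \neq 0$ since $U$ is a nontrivial $p$-group, so $N$ contains $v_0$, hence $N = kG\cdot v_0 = \image\eta$. For uniqueness, if $M$ is any simple $kG$-module whose one-dimensional $U$-fixed subspace has full stabilizer $P_J$ and trivial $P_J$-action, then Frobenius reciprocity gives $\Hom_{kG}(\Ff_J, M) = M^{P_J} \neq 0$, so $M$ is a simple quotient of $\Ff_J$; the pair (parabolic $P_J$, trivial character on $T$) pins $M$ down uniquely among such quotients, and since $\image\eta$ realizes one such quotient one gets $M\cong\image\eta$. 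The main obstacle in this plan is the Step-2 coefficient calculation in the Bruhat basis and the uniform control required to conclude that all non-surviving contributions are divisible by $q$, together with the lifting of $U$-fixed vectors from the quotient; both are precisely the content of \cite{CarterLusztig}, from which the present statement is in essence a corollary.
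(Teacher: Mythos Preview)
Your strategy is genuinely different from the paper's. You work directly inside $\Ff_J$ and $\Ff_K$, aiming to show $(\image\eta)^U=kv_0$ and then deduce simplicity via the $p$-group fixed-point trick. The paper instead lifts everything to the complete-flag module $\Ff=\Ff_\emptyset$: it writes the Carter--Lusztig operator as $\Theta^J_{w_0}=T_{w^*}\prod_j(1+T_{w_{i_j}})$, expands this as a sum of terms $T_{w^*w}$ with $w\in W_J$, and proves the identity $\pi_K T_{w^*w}=q^{\ell(w)}\,\eta\pi_J$ directly from Lemma~\ref{countfiber}. Reducing mod $p$ kills every term with $w\neq 1$, giving $\pi_K\Theta^J_{w_0}=\eta\pi_J$; since $\pi_J$ is surjective and $\Theta^J_{w_0}(\Ff)$ is simple by \cite[Theorem~7.1]{CarterLusztig}, one gets $\image\eta\cong\Theta^J_{w_0}(\Ff)$ in one stroke.

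Your plan has a real gap at Step~2(b). The ``short additional argument'' claiming $(\image\eta)^U=\eta(\Ff_J^U)$ does not follow from the $B$-orbit filtration of $\Ff_J$: that filtration explains why $U$-invariants of the successive $B$-orbit layers lift \emph{inside $\Ff_J$}, but it says nothing about lifting $U$-invariants through the quotient $\Ff_J/\ker\eta$ by an arbitrary $kG$-submodule. In characteristic $p$ there is no general such lifting, and establishing it here is essentially equivalent to the simplicity you want to prove. Your invocation of Lemma~\ref{countfiber} in Step~2(a) is also not direct: that lemma counts $B$-cosets in $Bw^*wBx$ for $w\in W_J$, whereas your coefficient $c_{w,w'}$ counts $P_J$-cosets in $P_JwB$ (with $w$ a minimal representative of $W_J\backslash W$) opposite a fixed $P_Kh$---a different index set, and the count need not be a single power of $q$. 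The paper's commutative-diagram reduction avoids both problems: Lemma~\ref{countfiber} is applied verbatim to produce the factor $q^{\ell(w)}$, and the one-dimensionality of the $U$-fixed space is imported wholesale from Carter--Lusztig rather than reproved.
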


The next subsections  describe some results in \cite{CarterLusztig}
and explain how Theorem~\ref{main} follows from them.

\subsection*{Fundamental endomorphisms of ${\Ff}_\emptyset$}

Let $\Ff={\Ff}_\emptyset$. 
For each $w\in W$ we define $T_w\in\End_k(\Ff)$ by the formula 
$$
T_w(f)(Bg)=\sum_{Bg'\subseteq Bw^{-1}Bg}f(Bg').
$$
Then 
$$
T_w\in \End_{kG}(\Ff),\quad\text{for all $w\in W$}.
$$
One can show (see \cite{CarterLusztig}) that
$$
T_{ww'}=T_wT_{w'} \quad \text{if $\ell(ww')=\ell(w)+\ell(w')$.}
$$

Let $w\in W$ have reduced expression
$$
w_{j_n}\cdots w_{j_1}.
$$
We consider the partial products $w_{j_1}$, $w_{j_2}w_{j_1}$, \dots 
$w_{j_n}\cdots w_{j_1}$. The positive roots sent to negative roots
by each partial product are also sent to negative roots by each of its
successors and each partial product sends exactly one more positive
root to a negative root than its predecessors. Explicitly, the new positive
root sent by $w_{j_i}\cdots w_{j_1}$ to a negative root is
$w_{j_1}\cdots w_{j_{i-1}}(r_{j_i})$.
With this in mind, we can define the endomorphsim $\Theta^J_{w_0}$
of  \cite[p.363]{CarterLusztig} for any $J$ subset of $I$.

For any  reduced expression 
$$
w_0=w_{j_k}\cdots w_{j_1}
$$
define
\begin{equation}
\Theta_{j_i}= 
\begin{cases} T_{w_{j_i}}\quad\text{if $w_{j_1}\cdots w_{j_{i-1}}(r_{j_i})\notin V_J$}\\
              I+T_{w_{j_i}}\quad\text{if $w_{j_1}\cdots w_{j_{i-1}}(r_{j_i})\in V_J$}
\end{cases}
\end{equation}
and set 
$$
\Theta^J_{w_0}=\Theta_{j_k}\Theta_{j_{k-1}}\cdots\Theta_{j_1}.
$$

The definition depends on the choice of reduced expression but it is shown
in \cite{CarterLusztig} that different expressions give the same endomorphism
up to a nonzero scalar multiple.

We can now state the result from which Theorem~\ref{main} can be deduced.
We recall that in every simple $kG$-module the subspace fixed by the subgroup
$U$ is one-dimensional. Therefore, the full stabilizer in $G$ of this subspace
must contain $N_G(U)=B$, so must be equal to some standard parabolic subgroup $P_Q$,
$Q\subseteq I$. Thus this line is a one-dimensional $kP_Q$-module.

\begin{theorem}(\cite[Theorem 7.1]{CarterLusztig})
The image $\Theta^J_{w_0}(\Ff)$ is a simple $kG$-module. The
full stablizer of the one-dimensional subspace of $U$-fixed points
in this module is $P_J$ and the action of $P_J$  on this one-dimensional
subspace is trivial.
\end{theorem}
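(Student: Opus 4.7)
The plan is to construct a specific $U$-fixed vector in the image, identify its full stabilizer in $G$, and deduce simplicity from a dimension count on the $U$-fixed subspace.

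First, I would compute $v := \Theta^J_{w_0}(\delta_B)$ explicitly. Let $s_y := \sum_{Bg \subseteq ByB}\delta_{Bg}$ denote the standard Bruhat basis element of $\Ff^U$. Using the relation $T_{ww'} = T_w T_{w'}$ when $\ell(ww') = \ell(w) + \ell(w')$, and unwinding the product $\Theta_{j_k}\cdots\Theta_{j_1}$ along a reduced expression $w_0 = w_{j_k}\cdots w_{j_1}$, I would prove by induction on the partial length that
\[
v = \sum_{w\in W_J} s_{w^*w}.
\]
The factors $T_{w_{j_i}}$ (appearing when $j_i\notin J$) push the support into new Bruhat cells in the $w^*$-direction, while the factors $I + T_{w_{j_i}}$ (appearing when $j_i\in J$) average over the $W_J$-translates on the right. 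Since each $s_y$ is $B$-fixed, $v \in \Ff^U$, and the number of cosets in the support agrees with the count from Lemma~\ref{countfiber}.

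Next, I would show that $\mathrm{Stab}_G(kv) = P_J$ and that $P_J$ acts trivially. For $i\in J$, the Hecke-algebra action of $w_i$ on $\sum_{w\in W_J}s_{w^*w}$ permutes the index set $\{w^*w : w\in W_J\}$ with appropriate cancellations and returns $v$, giving $W_J$-invariance and hence $P_J$-invariance with trivial character. For $i\notin J$, right multiplication by $w_i$ sends the index set outside itself (since $W_J w_i \cap W_J = \emptyset$), ruling out $w_i$-fixedness; so no larger standard parabolic stabilizes $kv$.

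With $v$ in hand, simplicity follows from a general mechanism. Since $\delta_B$ generates $\Ff$ as a $kG$-module and $\Theta^J_{w_0}$ is a $kG$-endomorphism, $M := \Theta^J_{w_0}(\Ff)$ is cyclically generated by $v$. I would then invoke the standard fact that any nonzero $kG$-submodule $N\subseteq M$ satisfies $N^U\neq 0$ (as $U$ is a $p$-group and $\mathrm{char}\, k = p$). If one knows $\dim M^U = 1$, then necessarily $N^U = M^U = kv$, whence $N\supseteq kG\cdot v = M$, and $M$ is simple. The claimed properties of the $U$-invariant line are then exactly what was established in the previous step.

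The main obstacle is establishing $\dim M^U = 1$. Concretely, one must show that $\Theta^J_{w_0}$, as a linear operator on the finite-dimensional space $\Ff^U$ with basis $\{s_w\}_{w\in W}$, has rank one. The approach I would take is to extend the explicit computation of the first step to all basis elements: show by induction along the reduced expression that $\Theta^J_{w_0}(s_w)$ is always either zero or a scalar multiple of $v$, with the scalar vanishing outside a prescribed transversal for $W_J$ in $W$. This is the combinatorial heart of the Carter--Lusztig argument: the precise calibration of $T_{w_{j_i}}$ versus $I + T_{w_{j_i}}$ in the definition of $\Theta^J_{w_0}$ is exactly what forces the cumulative product to collapse the Hecke-algebra image onto a single line. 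The verification is delicate because it requires tracking how the switches between the two cases interact with the Hecke relations at every step of the reduced expression, but once this rank-one property is in place, all remaining claims follow formally.
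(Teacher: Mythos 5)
The paper does not actually prove this statement; it cites Carter--Lusztig \cite[Theorem 7.1]{CarterLusztig} verbatim and uses it as a black box. So there is no in-paper proof to compare against, and your sketch should be judged as a reconstruction of the Carter--Lusztig argument.

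Your Steps 1--2 and the formal simplicity mechanism are sound: expanding along the special reduced expression does give $\Theta^J_{w_0}(\delta_B)=\sum_{w\in W_J}s_{w^*w}$ (this is the $0$-Hecke identity $\prod_i(1+T_{w_{j_i}})=\sum_{w\in W_J}T_w$ followed by $T_{w^*}$), and this $v$ is the characteristic function of $\{Bg : Bg\subseteq P_Kw_0P_J\}$, from which the stabilizer computation becomes transparent: $n_i\cdot v$ is the characteristic function of $\{Bg : Bg\subseteq P_Kw_0P_Jn_i^{-1}\}$, which equals $v$ iff $n_i\in P_J$. And your ``nonzero submodule has nonzero $U$-fixed points'' deduction of simplicity from $\dim M^U=1$ plus cyclic generation by $v$ is exactly right.

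The gap is in the sentence ``Concretely, one must show that $\Theta^J_{w_0}$, as a linear operator on $\Ff^U$, has rank one.'' This is \emph{not} equivalent to $\dim M^U=1$. The inclusion $\Theta^J_{w_0}(\Ff^U)\subseteq M^U$ is automatic, but the reverse inclusion is a genuine theorem, not a formality. In effect you need $(-)^U$ to remain right exact when applied to the surjection $\Ff\twoheadrightarrow M$, i.e.\ that every $U$-fixed element of $M$ has a $U$-fixed preimage in $\Ff$. You cannot average over $U$, since $\abs{U}$ is divisible by $p$; nor is $\Ff=\mathrm{Ind}_B^G k$ projective as a $kG$-module (again because $p\mid\abs{B}$), so the Frobenius-reciprocity/projectivity shortcut is unavailable. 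Establishing $M^U=\Theta^J_{w_0}(\Ff^U)$ for the specific endomorphisms $\Theta^J_{w_0}$ is one of the main technical points of Carter--Lusztig's Sections 3--7 (their analysis of $\Ff^U$ as a module over the Hecke algebra and of the kernels of the $T_w$'s), and your sketch silently absorbs it into the rank computation. So: the rank-one computation on $\Ff^U$ is necessary but not sufficient; you are missing the lemma that upgrades $\dim\Theta^J_{w_0}(\Ff^U)=1$ to $\dim M^U=1$.
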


(Our space $\Ff$ is the space denoted $\Ff_\chi$ in \cite{CarterLusztig} when
 $\chi$ is the trivial character.)

\subsection*{Proof of Theorem~\ref{main}}
Let
\begin{equation}\label{wJ}
w_J=w_{i_m}\cdots w_{i_2}w_{i_1}
\end{equation}
be a reduced expression for $w_J$. The above expression can be extended to a reduced expresion
\begin{equation}\label{w0}
w_0=w_{i_k}\cdots w_{i_{m+1}}w_{i_m}\cdots w_{i_1}
\end{equation}
of $w_0$. Thus $m=\abs{R_J^+}$ and $k=\abs{R^+}$.

Then 
\begin{equation}\label{w*}
w^*=w_{i_k}\cdots w_{i_{m+1}}.
\end{equation}
is a reduced expression for $w^*$.
We choose the special expression (\ref{w0}) for $w_0$ to define $\Theta^J_{w_0}$.
Since $w_J$ sends all positive roots 
in $V_J$ to negative roots and $w_0$ sends all negative roots to positive roots, 
it is clear
that for the first $m$ partial products the new positive root sent to a negative
root belongs to $V_J$, and that the new positive roots for the remaining partial products 
are the elements of $R^+\setminus R_J^+$, so do not belong to $V_J$.
Thus we have
\begin{equation}\label{Theta}
\Theta^J_{w_0}=T_{w^*}(1+T_{i_m})\cdots(1+T_{i_1}),
\end{equation}

Since $\ell(w^*w)=\ell(w^*)+\ell(w)$ for all $w\in W_J$, we see that
$\Theta^J_{w_0}$ is a sum of endomorphisms of the form
$T_{w^*w}$, for certain elements $w\in W_J$, with exactly one term of this sum
equal to $T_{w^*}$.

Let $\pi_J:\Ff\to\Ff_J$ be defined by
$$
(\pi_J(f))(P_Jg)=\sum_{Bh\subseteq P_Jg}f(Bh)
$$
and $\pi_K$ defined similarly. It is easily checked that
$\pi_J$ and $\pi_K$ are $kG$-module homomorphisms and they are surjective
since $\pi_J(\delta_{B})=\delta_{P_J}$.

We can now complete the proof of Theorem~\ref{main}.
The main step is to compare $\eta\pi_J$ with $\pi_KT_{w^*w}$ for  $w\in W_J$.
For $f\in\Ff$, we compute 
\begin{equation}
\begin{aligned}
\left[\eta(\pi_J(f))\right](P_Kg)&=\sum_{P_Jh\subseteq P_J{w^*}^{-1}P_Kg}\sum_{Bx\subseteq P_Jh}f(Bx)\\
                      &=\sum_{Bx\subseteq P_J{w^*}^{-1}P_Kg}f(Bx).
\end{aligned}
\end{equation}
and
\begin{equation}
\begin{aligned}
\left[\pi_K(T_{w^*w}(f))\right](P_Kh)&=\sum_{Bg\subseteq P_Kh}(T_{w^*w}f)(Bg)\\
&=\sum_{Bg\subseteq P_Kh}\sum_{Bx\subseteq B(w^*w)^{-1}Bg}f(Bx)\\
&=\sum_{Bg\subseteq P_Kh}\sum_{Bg\subseteq B(w^*w)Bx}f(Bx)\\
                    &=q^{\ell(w)}\sum_{Bx\subseteq P_J{w^*}^{-1}P_Kg}f(Bx).
\end{aligned}
\end{equation}
where the last equality follows from Lemma~\ref{countfiber}, since
$Bx\subseteq B(w^*w)^{-1}Bg$ if and only if $Bg\subseteq Bw^*wBx$.
Thus, we have for each $w\in W_J$ a commutative diagram
\begin{equation}
\xymatrix{
\Ff_J\ar[rr]^{q^{\ell(w)}\eta}&&\Ff_K\\ 
\Ff\ar[u]^{\pi_J}\ar[rr]^{T_{ww^*}} & & \Ff\ar[u]^{\pi_K},
}
\end{equation}
which means that for $w\neq 1$ we have $\pi_KT_{ww^*}=0$.
It now follows from (\ref{Theta}) that $\pi_K\Theta^J_{w_0}=\pi_KT_{w^*}=\eta\pi_J$.
Therefore, since $\Theta^J_{w_0}(\Ff)$ is simple and 
$\eta\pi_J(\Ff)\neq 0$, we see that 
$\eta\pi_J(\Ff)\cong \Theta^J_{w_0}(\Ff)$.
Finally,  since $\pi_J$ is surjective, we have $\eta(\Ff_J)\cong \Theta^J_{w_0}(\Ff)$.
This completes the proof of Theorem~\ref{main}.
\qed

\section{Highest weights}\label{hw}

If $G$ is a universal Chevalley group  or a twisted subgroup
of such a group then there is an algebraic group  $G(k)$,
the Chevalley group over $k$, and a Frobenius endomorphism $\sigma$ of $G(k)$
such that $G$ is the group of fixed points of $\sigma$. 
References for this theory are the original paper \cite[12.4]{St2} or the very complete 
account in \cite[Chapter 2]{GLS3}. 
 
The simple $kG$-modules are restrictions of certain
simple rational modules $L(\lambda)$ of $G(k)$ (\cite[13.1]{St2}),
where $\lambda$ is the highest weight of the module. This connection is
well known; for definitions and details we refer to the original sources \cite{St} and \cite{St2}.
The simple module $L(\lambda)$ is characterized by the property
that it has a unique $B$-stable line, and  $T$ acts on this line
by the character $\lambda$.

Assume that $G$ is a universal Chevalley group over $\F_q$. 
By Steinberg's theorem \cite[13.3]{St2}, the simple $G(k)$ modules with
highest weights in the set of \emph{$q$-restricted weights}
\begin{equation*}
X_q=\{\lambda=\sum_{i=1}^\ell a_i\omega_i\in X_+ \mid \,  0\leq a_i\leq q-1\ (\forall i)\}
\end{equation*}
remain irreducible upon restriction to $G$ and this gives a complete set
of mutually non-isomorphic simple $kG$-modules.

It is useful to identify the highest weight of the module in Theorem~\ref{main}.
We may assume that $R$ is indecomposable, since simple modules
for direct products are easily described in terms of the factors.

Let $\lambda_{opp}$ denote the $q$-restricted highest weight
such that the restriction of the simple $G(k)$-module
$L(\lambda_{opp})$ to $G$ is the simple module  in Theorem~\ref{main}.
The condition in Theorem~\ref{main} that $P_J$ is the full
stabilizer in $G$ of the one-dimensional highest weight space  of
$L(\lambda_{opp})$ is equivalent to the condition that
for $i=1$,\dots, $\ell$,
$\langle \lambda_{opp}, \alpha_i^\vee\rangle=0$ if and only if $i\in J$.
The condition in Theorem~\ref{main} that $P_J$
acts trivially on the highest weight space means that the
restriction of $\lambda_{opp}$ to $H$ is the trivial character,
which in turn means that, when $\lambda_{opp}$
is written as a linear combination of fundamental weights,
all of the coefficients must be $0$ or $q-1$.
These conditions allow us to identify $\lambda_{opp}$.
The fundamental weights $\omega_i$ for the ambient algebraic group
are indexed by $I$, and $\lambda_{opp}=\sum_{i\in I\setminus J}(q-1)\omega_i$.

Consider next a twisted group $G$, constructed from 
an overlying universal Chevalley  group $G^*$ over $\F_q$, as the group
of fixed points of an automorphism induced by an isometry 
$\rho$ of order $e$ of the Dynkin diagram of $G^*$.
Then $q=q_0^e$ for some prime power $q_0$. 
Let $I^*=\{1,\ldots,\ell^*\}$ index the fundamental roots for $G^*$. Then
the index set $I$ for $G$ labels the $\rho$-orbits on $I^*$.
Let $\omega_i$, $i\in I^*$ be the fundamental weights of the ambient algebraic group.
For $J\subseteq I$, let $J^*\subset I^*$ be the union of the orbits in $J$. 
Then $\lambda_{opp}=\sum_{i\in I^*\setminus J^*}(q_0-1)\omega_i$.

Finally, we consider the case of Suzuki and Ree groups.
Here $G$ is the subgroup of fixed points in $G(k)$ of
a Steinberg endomorphism $\tau$ which induces a length-changing permutation 
of the Dynkin diagram of $G(k)$ and such that $\tau^2=\sigma$ is a Frobenius
endomorphism of $G(k)$ with respect to a rational structure over a finite field
$\F_q$. Then the set $I$ for $G$ indexes the subset of fundamental
weights of the ambient algebraic group which are orthogonal to the long simple roots, and for $J\subset I$, we have
$\lambda_{opp}=\sum_{i\in I\setminus J}(q-1)\omega_i$.

\begin{example}
As examples, we can consider the extreme cases.
If  $J=K=I$, then $L(\lambda_{opp})\cong k$.
If $J=K=\emptyset$,  $L(\lambda_{opp})$ is the {\it Steinberg module} of the group $G$, which has dimension equal to the the order of $U$, the $p$-sylow subgroup of $G$.
\end{example}

From the discussion above, we see that 
$\lambda_{opp}$  has the form $(q-1)\tilde\omega$,
where $\tilde\omega$ is a sum of fundamental weights. 
If $q=p^t$, then by  Steinberg's Tensor Product Theorem, we have an isomorphism

\begin{equation}
L((q-1)\tilde\omega)\cong L((p-1)\tilde\omega)\otimes L((p-1)\tilde\omega)^{(p)}\otimes\cdots
\otimes L((p-1)\tilde\omega)^{(p^{t-1})}
\end{equation}
as modules for the algebraic group. Here, the superscripts indicate
twisting by powers of the Frobenius morphism. This twisting changes the isomorphism 
type of a module, but does not change its dimension.
In particular, we have the following numerical result.

\begin{proposition}
Let the root system $R$ and opposite cotypes $J$ and $K$ be given and let
 $A(q)=A(q)_{J,K}$ denote the 
oppositeness incidence matrix  for objects of cotypes $J$ and $K$ 
in the building over $\F_q$, where $q=p^t$.
Then $\rank_pA(q)=(\rank_p A(p))^t$. 
\end{proposition}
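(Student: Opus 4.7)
The plan is to assemble two ingredients already in place in the excerpt: first, the identification of $\image\eta$ as the simple module $L(\lambda_{opp})$ from Theorem~\ref{main} together with Section~\ref{hw}; and second, the Steinberg tensor product isomorphism displayed just before the proposition.

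First, I would observe that by Theorem~\ref{main}, $\image\eta$ is a simple $kG$-module, so $\rank_p A(q)=\dim_k\image\eta$. Section~\ref{hw} identifies $\image\eta$ with the restriction to $G$ of the algebraic-group module $L(\lambda_{opp})$ and exhibits its highest weight in the form $\lambda_{opp}=(q-1)\tilde\omega$, where $\tilde\omega=\sum_{i\in I\setminus J}\omega_i$ depends only on the root system $R$ and the cotype $J$, not on $q$. Thus
\[\rank_p A(q)=\dim_k L((q-1)\tilde\omega),\qquad \rank_p A(p)=\dim_k L((p-1)\tilde\omega),\]
and the $q$-independence of $\tilde\omega$ is the structural fact that makes the reduction possible.

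Second, I would apply the Steinberg tensor product isomorphism
\[L((q-1)\tilde\omega)\cong L((p-1)\tilde\omega)\otimes L((p-1)\tilde\omega)^{(p)}\otimes\cdots\otimes L((p-1)\tilde\omega)^{(p^{t-1})},\]
and use the fact that Frobenius twisting preserves $k$-dimension. Each of the $t$ tensor factors therefore has $k$-dimension equal to $\dim_k L((p-1)\tilde\omega)$, giving $\dim_k L((q-1)\tilde\omega)=\bigl(\dim_k L((p-1)\tilde\omega)\bigr)^t$. Substituting the two identifications above yields $\rank_p A(q)=(\rank_p A(p))^t$.

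There is no real obstacle here; the content has been front-loaded into the preceding sections. The only point that deserves emphasis in the writeup is the $q$-independence of $\tilde\omega$, which is precisely what the explicit formulas in Section~\ref{hw} were arranged to display. For twisted and Suzuki--Ree groups the same pattern applies, with $\tilde\omega$ read off from the corresponding formula for $\lambda_{opp}$ in Section~\ref{hw} and the exponent $t$ adjusted to match the number of Frobenius twists needed to express $q-1$ (respectively $q_0-1$) as arising from a $p$-power.
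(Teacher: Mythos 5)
Your proof is correct and takes essentially the same route as the paper: it combines the identification $\rank_p A(q)=\dim L(\lambda_{opp})$ from Theorem~\ref{main}, the formula $\lambda_{opp}=(q-1)\tilde\omega$ with $\tilde\omega$ independent of $q$ from \S\ref{hw}, the Steinberg tensor product decomposition displayed just before the proposition, and the fact that Frobenius twisting preserves dimension. This is precisely the chain of observations the paper records (``In particular, we have the following numerical result''), so nothing further is needed.
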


\begin{remark}
The proposition states that once $\rank_p A(p)$ is known then we know $\rank_p A(q)$ for all powers $q$ of $p$. This reduction of the computation to the prime case is
significant from the viewpoint of representation theory of algebraic groups, where the Weyl modules  with modules highest weight $(p-1)\tilde\omega$ are much less 
complex in structure than those of highest weight $(p^2-1)\tilde\omega$, say.

A very useful tool for analyzing Weyl modules is the {\it Jantzen Sum Formula} \cite[II.8.19]{Jantzen}:
The Weyl module $V(\lambda)$ has a descending filtration,
of submodules $V(\lambda)^i$, $i>0$, such that
\begin{equation*}
V(\lambda)^1=\rad V(\lambda),\quad\text{(so that $V(\lambda)/V(\lambda)^1\cong L(\lambda)$)}
\end{equation*}
and 
\begin{equation}\label{jsum}
\sum_{i>0}\Ch( V(\lambda)^i)= -\sum_{\alpha>0}
\sum_{\{m: 0<mp<\dprod{\lambda+\rho}{\alpha^\vee}\}}v_p(mp)\chi(\lambda-mp\alpha)
\end{equation}
First, we recall the notation in this formula (which is standard, and follows 
from \cite{Jantzen}). The module $V(\lambda)$ is the Weyl module and the module $L(\lambda)$ is
its simple quotient. The weight $\rho$ is the half-sum of the positive roots
and $v_p(m)$ denotes the exponent of $p$ in the prime factorization of $m$.
Finally, the character $\chi(\mu)$ is the so-called Weyl character; there is a 
unique weight of the form $\mu'=w(\mu+\rho)-\rho$ in the region $\{\nu :\langle \nu+\rho,\alpha^\vee\rangle\geq 0, \forall \alpha\in R^+\}$, where $w\in W$.
Then $\chi(\mu)$ is the $\text{sign}(w)\Ch V(\mu')$ if $\mu'$ is dominant, and zero
otherwise. 
The usefulness of the sum formula is based on the fact that
the characters of the Weyl modules themselves are given by Weyl's Character Formula,
so that the right hand side can be computed from  $p$, $R$ and $\lambda$.
In \cite{Ars} there is a  detailed description of a procedure  
for performing this computation. 
We shall refer to the quantity in (\ref{jsum}) as the Jantzen sum for $V(\lambda)$. 
As can be seen from the left hand side
of (\ref{jsum}), the Jantzen sum gives an upper estimate on the composition multiplicities in the radical of the Weyl module $V(\lambda)$ in terms of the composition
factors of Weyl modules which have lower highest weights. Sometimes, for weights of
a special form, more information can be obtained, using induction and
other facts from representation theory. For example when
we start with a weight of the form $\lambda=(p-1)\tilde\omega$, 
it may be that the highest weights of the Weyl characters $\chi(\mu)$ 
in the Jantzen sum are very few in number
or all have a similar form, such as $r\tilde\omega$ for $r<p-1$. 
In such cases, it is possible to deduce the character of $L((p-1)\tilde\omega)$.
Numerous examples of this method were worked out in detail in \cite{Ars}. 
\end{remark}

\section{Examples}
We consider again some of the examples from the Introduction.
\begin{example}
For type $A_\ell$, $J=I\setminus\{i\}$, the $p$-ranks have been computed
in \cite{Sin}. (In fact it is the $p$-rank of the complementary relation
of nonzero intersection which is computed, which equals one plus the $p$-rank
for zero intersection.)
In this case, the simple modules $L((p-1)\omega_i)$ can be found
without reference to Weyl modules. Let $S(i(p-1))$ denote the
degree $i(p-1)$ homogeneous component of the truncated 
polynomial ring $k[x_0,\ldots,x_\ell]/(x_i^p; 0\leq i\leq\ell)$, with
$G\cong\SL((\ell+1,k)$ acting through linear substitutions. Then it is well known and 
elementary to show that $S(i(p-1))$ is a simple $kG$-module.
By inspecting the highest weights, we see that $S(i(p-1))\cong L((p-1)\omega_{\ell+1-i})$,
for $i=1$,\dots, $\ell$.
\end{example}

\begin{example}
For the examples of types $B_\ell$, $C_\ell$ and $D_\ell$, with $J=I\setminus\{1\}$
concerning singular points in classical spaces, the $p$-ranks have been
computed in \cite{Ars} by analysis of the  Weyl modules, as outlined above.
The Weyl modules in question are $V((p-1)\omega_1)$ and for type $C_\ell$ they
are simple, so work is only needed for types $B_\ell$ and $D_\ell$.
The method can also be extended to the classical modules of the classical
groups of twisted type, namely the non-split orthogonal groups (type $^2D_\ell$)
and the unitary groups (type $^2A_\ell$). In the latter case
one must study the Weyl module $V((p-1)(\omega_1+\omega_\ell))$, in accordance
with our discussion  in \S\ref{hw}.
\end{example}

\begin{example}
We will sketch the computation of the $p$-rank for Example~\ref{E6example}.
The Jantzen sum for $V((p-1)\omega_1)$ is equal to the following:
\begin{multline}\label{alt}
\chi((p-7)\omega_1+3\omega_6)-\chi((p-8)\omega_1+\omega_4+2\omega_6)+
\chi((p-9)\omega_1+\omega_3+\omega_6)\\
-\chi((p-10)\omega_1+\omega_2+\omega_5)+\chi((p-11)\omega_1+2\omega_2)
\end{multline}
The next step is to study the structure of the Weyl modules appearing in this
character, again by using the sum formula.

We observe first that the Jantzen sum  for 
$V((p-11)\omega_1+2\omega_2)$ is zero, so this Weyl module is simple.

Next, the Jantzen sum for $V((p-10)\omega_1+\omega_2+\omega_5)$
is $\chi((p-11)\omega_1+2\omega_2)$. It follows that the radical of
$V((p-10)\omega_1+\omega_2+\omega_5)$ is isomorphic to the simple module
$V((p-11)\omega_1+2\omega_2)$.

The Jantzen sum for 
$V((p-9)\omega_1+\omega_3+\omega_6)$ is equal to 
$$
\chi((p-10)\omega_1+\omega_2+\omega_5)-\chi((p-11)\omega_1+2\omega_2),
$$
which by the 
previous paragraph is equal to $\Ch L((p-10)\omega_1+\omega_2+\omega_5)$. Therefore
$\rad V((p-9)\omega_1+\omega_3+\omega_6)\cong L((p-10)\omega_1+\omega_2+\omega_5)$.

The Jantzen sum for $V((p-8)\omega_1+\omega_4+2\omega_6)$
is 
$$
\chi((p-9)\omega_1+\omega_3+\omega_6)-\chi((p-10)\omega_1+\omega_2+\omega_5)+
\chi((p-11)\omega_1+2\omega_2)=\Ch L((p-9)\omega_1+\omega_3+\omega_6),
$$
which
leads us to conclude that 
$\rad V((p-8)\omega_1+\omega_4+2\omega_6)\cong L((p-9)\omega_1+\omega_3+\omega_6$.

The Jantzen sum for $V((p-7)\omega_1+3\omega_6)$
is 
\begin{multline}
\chi((p-8)\omega_1+\omega_4+2\omega_6)-
\chi((p-9)\omega_1+\omega_3+\omega_6)+\chi((p-10)\omega_1+\omega_2+\omega_5)\\
-\chi((p-11)\omega_1+2\omega_2)\cong \Ch L((p-8)\omega_1+\omega_4+2\omega_6).
\end{multline}
Hence $\rad V((p-7)\omega_1+3\omega_6)\cong L((p-8)\omega_1+\omega_4+2\omega_6)$.

Finally, by (\ref{alt}), we have $\rad V((p-1)\omega_1)\cong L((p-7)\omega_1+3\omega_6)$.
Therefore, there is an exact sequence
\begin{multline*}
0\to V((p-11)\omega_1+2\omega_2)\to V((p-10)\omega_1+\omega_2+\omega_5)\\
\to V((p-9)\omega_1+\omega_3+\omega_6)\to V((p-8)\omega_1+\omega_4+2\omega_6)\\
\to V((p-7)\omega_1+3\omega_6)\to V((p-1)\omega_1)\to L((p-1)\omega_1)\to 0
\end{multline*}
The  dimensions of the $V(\mu)$ are given by Weyl's formula. 
Hence,
\begin{multline*}
\dim L((p-1)\omega_1)=\frac{1}{2^7.3.5.11}p(p+1)(p+3)\\
\times(3p^8-12p^7+39p^6+320p^5\\
-550p^4+1240p^3+2080p^2-1920p+1440)
\end{multline*}
\end{example}

\begin{comment}
\section{Concluding Remarks}
The oppositeness relations of the building of a finite group
of Lie type give rise to simple modules.
We have considered some basic examples of oppositeness relations and described their
associated modules, but the general case remains open.
The problem of computing the $p$-ranks of oppositeness relations 
has been reduced to the groups over the prime field and to the problem
of computing a simple module for an algebraic group whose highest weight
is a combination of fundamental weights with coefficients
equal to $(p-1)$ or $0$. We note that when $p=2$ every restricted highest weight
is of this form. Since the dimensions of all simple rational modules 
can be computed from those with restricted
highest weights, it follows that, in characteristic $2$, 
the problem of $p$-ranks of oppositeness
relations is equivalent to the problem of finding
the dimensions of all simple rational modules for the algebraic group. 
\end{comment}

\section{Acknowledgements}
Thanks are due to the anonymous referees for their careful reading and
helpful suggestions.

\end{document}